\newtheorem{theorem}{Theorem}[section]
\newtheorem{lemma}[theorem]{Lemma}
\newtheorem{corollary}[theorem]{Corollary}
\theoremstyle{definition}
\theoremstyle{remark}
\numberwithin{equation}{section}
\newcommand{\mmod}[1]{\,\,(\text{\rm mod}\,\,#1)}
\def\bfa{{\mathbf a}}
\def\bfh{{\mathbf h}}
\def\bfn{{\mathbf n}}
\def\bfx{{\mathbf x}}
\def\bfy{{\mathbf y}}
\def\calI{{\mathcal I}}
 \def\Ktil{{\widetilde K}}
\def\Ktil{\widetilde K}
\def\dbN{{\mathbb N}}  
\def\dbR{{\mathbb R}}
\def\dbZ{{\mathbb Z}}\def\dbQ{{\mathbb Q}}
\def\grA{{\mathfrak A}}
\def\grB{{\mathfrak B}}
\def\grf{{\mathfrak f}}\def\grF{{\mathfrak F}}
\def\grg{{\mathfrak g}}
\def\grJ{{\mathfrak J}}
\def\grk{{\mathfrak k}} \def\grK{{\mathfrak K}}
\def\grm{{\mathfrak m}}\def\grM{{\mathfrak M}}
\def\grN{{\mathfrak N}}\def\grn{{\mathfrak n}}
\def\grS{{\mathfrak S}}\def\grP{{\mathfrak P}}
\def\grW{{\mathfrak W}}\def\grB{{\mathfrak B}}
\def\grK{{\mathfrak K}}\def\grp{{\mathfrak p}}
\def\grW{{\mathfrak W}}
\def\alp{{\alpha}} \def\bfalp{{\boldsymbol \alpha}} 
\def\bet{{\beta}}  \def\bfbet{{\boldsymbol \beta}}
\def\gam{{\gamma}} 
\def\bfgam{{\boldsymbol \gam}} 
\def\del{{\delta}} \def\Del{{\Delta}}
\def\tet{{\theta}} \def\bftet{{\boldsymbol \theta}}
\def\sig{{\sigma}}
 \def\Ome{{\Omega}}
\def\eps{\varepsilon}
\def\le{\leqslant} \def\ge{\geqslant}
\def\d{{\,{\rm d}}}
\begin{document}
\title[Hilbert-Kamke problem]{Subconvexity and the Hilbert-Kamke problem}
\author[Trevor D. Wooley]{Trevor D. Wooley}
\address{Department of Mathematics, Purdue University, 150 N. University Street, West 
Lafayette, IN 47907-2067, USA}
\email{twooley@purdue.edu}
\subjclass[2010]{11P55, 11L07, 11D72}
\keywords{Hilbert-Kamke problem, Vinogradov's mean value theorem.}
\thanks{The author's work is supported by NSF grants DMS-2001549 and DMS-1854398.}
\date{}
\dedicatory{}
\begin{abstract}When $s\ge k\ge 3$ and $n_1,\ldots ,n_k$ are large natural numbers, 
denote by $A_{s,k}(\bfn)$ the number of solutions in non-negative integers $\bfx$ to the 
system
\[
x_1^j+\ldots +x_s^j=n_j\quad (1\le j\le k).
\]
Under appropriate local solubility conditions on $\bfn$, we obtain an asymptotic formula 
for $A_{s,k}(\bfn)$ when $s\ge k(k+1)$. This establishes a local-global principle in the 
Hilbert-Kamke problem at the convexity barrier. Our arguments involve minor arc estimates 
going beyond square-root cancellation.
\end{abstract}
\maketitle

\section{Introduction} In this memoir we consider the asymptotic formula for the number 
of representations in the Hilbert-Kamke problem, our goal being to derive conclusions at or 
below the classical convexity barrier. When $n_1,\ldots ,n_k$ are positive integers, we 
denote by $A_{s,k}(\bfn)$ the number of solutions in non-negative integers $\bfx$ to the 
system of Diophantine equations
\begin{equation}\label{1.1}
x_1^j+\ldots +x_s^j=n_j\quad (1\le j\le k).
\end{equation}
Motivated by his recent work on Waring's problem (see \cite{Hil1909}), Hilbert posed the 
problem of determining suitable conditions on $\bfn$ that would guarantee, for an 
appropriate function $H(k)$, the non-vanishing of $A_{s,k}(\bfn)$ for $s\ge H(k)$. This 
problem was taken up, first by Kamke \cite{Kam1921}, and subsequently by Mardzhanishvili 
\cite{Mar1937} using Vinogradov's methods. The precise nature of the local conditions on 
$\bfn$ that must be imposed are quite complicated to describe, and we defer further 
discussion on this issue until later in this section.\par

In order to outline the current state of play, we recall the mean value
\begin{equation}\label{1.2}
J_{t,k}(X)=\int_{[0,1)^k}\biggl| \sum_{1\le x\le X}e(\alp_1x+\ldots +\alp_kx^k)
\biggr|^{2t}\d\bfalp ,
\end{equation}
in which $e(z)$ denotes $e^{2\pi iz}$. A consequence of the main conjecture in 
{\it Vinogradov's mean value theorem} asserts that when $k\in \dbN$ and $t\ge k(k+1)/2$, 
then for each $\eps>0$ one has
\begin{equation}\label{1.3}
J_{t,k}(X)\ll X^{2t-\frac{1}{2}k(k+1)+\eps},
\end{equation}
with the implicit constant depending at most on $k$, $t$ and $\eps$. Experts in the 
Hardy-Littlewood method will perceive that, provided the upper bound (\ref{1.3}) is 
known to hold for $t\ge t_0(k)$, then one is able to derive an asymptotic formula 
for $A_{s,k}(\bfn)$ whenever $s>2t_0(k)$. This formula takes the shape
\begin{equation}\label{1.4}
A_{s,k}(\bfn)=\grJ_{s,k}(\bfn)\grS_{s,k}(\bfn)X^{s-k(k+1)/2}+o(X^{s-k(k+1)/2}),
\end{equation}
where $X=\max \{n_1,n_2^{1/2},\ldots ,n_k^{1/k}\}$ and $\grJ_{s,k}(\bfn)$ and 
$\grS_{s,k}(\bfn)$ are respectively the {\it singular integral} and {\it singular series} 
associated with this problem. For now we defer explicit definition of these quantities, as well 
as discussion of conditions ensuring that $1\ll \grJ_{s,k}(\bfn)\ll 1$ and 
$1\ll \grS_{s,k}(\bfn)\ll 1$.\par

Prior to 2010, in the classical version of the subject resolved by Arkhipov \cite{Ark1984}, 
the upper bound (\ref{1.3}) was known to hold for $t\ge (2+o(1))k^2\log k$. This delivers 
(\ref{1.4}) for $s\ge (4+o(1))k^2\log k$. The trivial lower bound $J_{t,k}(X)\gg X^t$ shows 
that (\ref{1.3}) cannot hold when $t<k(k+1)/2$, and hence $t_0(k)\ge k(k+1)/2$. 
Consequently, the application of conventional methods can at best establish the asymptotic 
formula (\ref{1.4}) when $s\ge k^2+k+1$. Decisive progress toward this convexity-limited 
bound was made by the author \cite[Theorem 9.2]{Woo2012a} via the efficient congruencing 
method, establishing (\ref{1.4}) for $s\ge 2k^2+2k+1$. Finally, advances in the theory of 
Vinogradov's mean value theorem (see \cite{BDG2016, Woo2016, Woo2019}) show that the 
conjectured estimate (\ref{1.3}) holds for $t\ge k(k+1)/2$, and hence the asymptotic 
formula (\ref{1.4}) holds for $s\ge k^2+k+1$.\par

Our goal in the present paper is to go beyond this convexity-limited conclusion by confirming 
(\ref{1.4}) for $s\ge k^2+k$. In preparation for the statement of our new conclusion, we 
first introduce the generating functions
\begin{equation}\label{1.5}
I(\bfbet;X)=\int_0^Xe(\bet_1\gam+\ldots +\bet_k\gam^k)\d\gam
\end{equation}
and
\begin{equation}\label{1.6}
S(q,\bfa)=\sum_{r=1}^q e_q(a_1r+\ldots +a_kr^k) ,
\end{equation}
in which $e_q(u)$ denotes $e^{2\pi i u/q}$. Then, with 
$X=\underset{1\le j\le k}{\max}n_j^{1/j}$, we define
\begin{equation}\label{1.7}
\grJ_{s,k}(\bfn)=\int_{\dbR^k}I(\bfbet ;1)^se\left(-\bet_1\frac{n_1}{X}-\ldots
 -\bet_k\frac{n_k}{X^k}\right)\d\bfbet 
\end{equation}
and
\begin{equation}\label{1.8}
\grS_{s,k}(\bfn)=\sum_{q=1}^\infty \sum_{\substack{1\le \bfa\le q\\ (q,a_1,\ldots ,a_k)=1
}}q^{-s}S(q,\bfa)^se_q(-a_1n_1-\ldots -a_kn_k).
\end{equation}

\begin{theorem}\label{theorem1.1} Suppose that $k\ge 3$ and $s\ge k^2+k$. Then, 
whenever $n_1,\ldots ,n_k$ are natural numbers sufficiently large in terms of $s$ and $k$, 
one has
\[
A_{s,k}(\bfn)=\grJ_{s,k}(\bfn)\grS_{s,k}(\bfn)X^{s-k(k+1)/2}+o(X^{s-k(k+1)/2}),
\]
in which $0\le \grJ_{s,k}(\bfn)\ll 1$ and $0\le \grS_{s,k}(\bfn)\ll 1$. If, moreover, the 
system (\ref{1.1}) possesses a non-singular real solution with positive coordinates, then 
$\grJ_{s,k}(\bfn)\gg 1$. Likewise, if the system (\ref{1.1}) possesses primitive non-singular 
$p$-adic solutions for each prime number $p$, then $\grS_{s,k}(\bfn)\gg 1$.
\end{theorem}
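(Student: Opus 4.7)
The plan is to apply the Hardy--Littlewood circle method with the generating function
\[
f(\bfalp;X) = \sum_{1\le x\le X} e(\alp_1 x + \alp_2 x^2 + \ldots + \alp_k x^k),
\]
so that orthogonality gives $A_{s,k}(\bfn) = \int_{[0,1)^k} f(\bfalp;X)^s e(-\bfalp\cdot\bfn)\,d\bfalp$. I would partition $[0,1)^k$ into Hardy--Littlewood major arcs $\grM(Q)$, comprising neighborhoods of rationals $\bfa/q$ with $q\le Q$ and $|\alp_j - a_j/q|\le QX^{-j}$, and complementary minor arcs $\grm$, for a parameter $Q=X^\eta$ with $\eta>0$ small. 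On $\grM(Q)$, I would use the standard Poisson-summation approximation $f(\bfalp;X)\approx q^{-1}S(q,\bfa)I(\bfbet;X)$ to extract the main term $\grJ_{s,k}(\bfn)\grS_{s,k}(\bfn)X^{s-k(k+1)/2}$ with admissible error. The absolute convergence of (\ref{1.7})--(\ref{1.8}) and the bounds $\grJ_{s,k}\ll 1$, $\grS_{s,k}\ll 1$ follow from standard estimates for $I(\bfbet;X)$ and $S(q,\bfa)$ valid when $s\ge k(k+1)$, while positivity under the stated local solubility conditions is classical.

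The crux is the minor arc estimate: one requires
\[
\int_\grm |f(\bfalp;X)|^{s}\,d\bfalp = o\bigl( X^{s-k(k+1)/2}\bigr).
\]
At the critical value $s=k(k+1)$, simply invoking the sharp form of Vinogradov's mean value theorem (\ref{1.3}) with $t=k(k+1)/2$ produces only $X^{k(k+1)/2+\eps}$ for the entire integral over $[0,1)^k$, which fails to beat the main term precisely by the factor $X^\eps$. I would therefore dissect $\grm$ dyadically by the quality of simultaneous rational approximation, writing $\grm=\bigcup_R \grm_R$ where $\grm_R$ collects those $\bfalp$ whose best simultaneous denominator lies in a dyadic block of size $R$, for $Q<R\le X^{k-\eta}$. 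On each $\grm_R$ a Weyl-type pointwise bound for $f(\bfalp;X)$ sharpening as $R$ grows would be combined via H\"older with the sharp mean value (\ref{1.3}) applied to the remaining factors, with the aim that summing over dyadic scales $R$ keeps the total within $o(X^{k(k+1)/2})$.

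The hard part will be establishing a genuinely subconvex mean value estimate on the minor arcs themselves, of the shape
\[
\int_\grm |f(\bfalp;X)|^{k(k+1)}\,d\bfalp \ll X^{k(k+1)/2}(\log X)^{-A}
\]
for every $A>0$, or better with a power saving $X^{-\del}$. This amounts to proving that the $X^\eps$ loss in the sharp Vinogradov bound is entirely concentrated on the major arcs $\grM(Q)$, and so demands a refined understanding of how near-solutions to the underlying Vinogradov system distribute according to their rational approximation. Achieving such a bound is likely to require reworking the efficient congruencing or decoupling proofs of (\ref{1.3}) so as to track the arithmetic structure of the contributing tuples and isolate the near-diagonal contributions clustered at rationals of low height. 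Once this subconvex minor arc estimate is available, combining it with the major arc asymptotic of the first paragraph yields the stated formula for $A_{s,k}(\bfn)$ throughout $s\ge k^2+k$.
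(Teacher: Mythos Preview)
You have correctly located the obstruction: at $s=k(k+1)$ the sharp Vinogradov bound $J_{k(k+1)/2,k}(X)\ll X^{k(k+1)/2+\eps}$ falls short of the minor arc target by exactly $X^\eps$, and no straightforward combination of a Weyl sup-norm bound with H\"older against (\ref{1.3}) recovers this loss. Your dyadic-in-$R$ scheme runs into the same wall: pulling off $m$ copies of $f$ for the Weyl bound leaves $\int|f|^{k(k+1)-m}$, and for every $m\ge 1$ the resulting exponent budget is $X^{m(1-\sig)}\cdot X^{k(k+1)/2-m/2+\eps}$ at best, which is never $o(X^{k(k+1)/2})$ since $\sig(k)\le 1/4$ for $k\ge 3$.

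The genuine gap is in your last paragraph. You propose to obtain
\[
\int_\grm |f(\bfalp;X)|^{k(k+1)}\,\d\bfalp \ll X^{k(k+1)/2-\del}
\]
by ``reworking the efficient congruencing or decoupling proofs'' to track arithmetic structure. This is both vague and, more importantly, not what is done in the paper---nor is such a subconvex $L^{k(k+1)}$ bound on the minor arcs known. The paper does \emph{not} revisit the proof of (\ref{1.3}); it uses that estimate as a black box. The key idea you are missing is this: one must \emph{not} take absolute values on the minor arcs. The signed integral
\[
I_s(\grm;X;\bfn)=\int_\grm\int_{[0,1)^{k-1}} f(\bfalp)^s e(-\bfalp\cdot\bfn)\,\d\bfalp
\]
retains the arithmetic feature that the underlying system $\sum_i x_i^j=n_j$ is \emph{not} translation invariant (shifting every $x_i$ by $y$ does not preserve the equations, because there is no cancelling block of variables). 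One exploits this by averaging over shifts $0\le y\le X$: after the substitution $x_i\mapsto x_i+y$, the phase acquires a polynomial in $y$ of degree $k$ with leading coefficient $s\alp_k$, and summing over $y$ produces a \emph{new} copy of $f$ evaluated at $s\bfalp$. H\"older's inequality then bounds $I_s(\grm;X;\bfn)$ by $X^{-1}(\log X)^s$ times a geometric mean of two mean values of $|f|^{s+1}$ over the minor arcs. With $s+1=k(k+1)+1$ copies, one is strictly above the critical threshold, so a single Weyl factor can now be extracted without overdrawing the Vinogradov budget, yielding a power saving $Q^{-\sig(k)}$.

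In short: the route to subconvexity here is not a refinement of Vinogradov's mean value theorem, but a device---specific to the signed, non-translation-invariant problem---that manufactures one extra variable for free. Your proposal as written does not contain this idea, and the alternative you sketch would require proving a strictly stronger and currently unavailable estimate.
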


We remark that the singular integral $\grJ_{s,k}(\bfn)$ is known to converge absolutely 
for $s>\tfrac{1}{2}k(k+1)+1$, and that the singular series $\grS_{s,k}(\bfn)$ is known 
to converge absolutely for $s>\tfrac{1}{2}k(k+1)+2$ (see \cite[Theorem 1]{Ark1984} or 
\cite[Theorem 3.7]{AKC2004}).\par

The extensive theory of quadratic forms ensures that the situation with $k=2$ is simple to 
handle for $s\ge 5$, and indeed much can be said even when $s=4$. We remark also 
that two obvious local conditions are in play if one is to have solutions to the system 
(\ref{1.1}). First, by applying H\"older's inequality on the left hand side of (\ref{1.1}), one 
sees that for solutions to exist one must have
\[
n_l^{j/l}\le n_j\le s^{1-j/l}n_l^{j/l}\quad (1\le j\le l\le k).
\]
Second, from Fermat's theorem, for each prime $p$ one must have 
$n_l\equiv n_j\mmod{p}$ whenever $l\equiv j\mmod{p-1}$ and $1\le j\le l\le k$. The latter 
observation plainly impacts the $p$-adic solubility conditions associated with the system 
(\ref{1.1}), a matter rather complicated to analyse in full. Indeed $p$-adic solubility is not 
assured in general without at least $2^k-1$ variables being available. See the excellent 
accounts of Arkhipov \cite{Ark1984, Ark2006} and \cite[Chapter 8]{AKC2004} for a 
comprehensive account of such issues. However, if one is permitted to assume the existence 
of non-singular primitive solutions at each local completion of $\dbQ$, then one obtains an 
immediate corollary to Theorem \ref{theorem1.1} via the methods of \cite{Ark1984}.

\begin{corollary}\label{corollary1.2} Suppose the system (\ref{1.1}) has non-singular 
primitive solutions at each local completion of $\dbQ$, and $s\ge k(k+1)$. Then 
$A_{s,k}(\bfn)\gg n_1^{s-k(k+1)/2}$.
\end{corollary}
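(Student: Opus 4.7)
The plan is to derive Corollary \ref{corollary1.2} as an immediate consequence of Theorem \ref{theorem1.1}, so the work reduces to matching the hypotheses and then replacing the scale parameter $X$ by $n_1$. First I would observe that the assumption $s\ge k(k+1)=k^2+k$ places us precisely in the regime of Theorem \ref{theorem1.1}, so the asymptotic formula
\[
A_{s,k}(\bfn)=\grJ_{s,k}(\bfn)\grS_{s,k}(\bfn)X^{s-k(k+1)/2}+o(X^{s-k(k+1)/2})
\]
is available whenever $\bfn$ is sufficiently large in terms of $s$ and $k$.

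Next I would translate the hypothesis of non-singular primitive solutions at each local completion of $\dbQ$ into lower bounds on the singular series and singular integral. At each finite place, the assumption is verbatim that of Theorem \ref{theorem1.1}, and hence $\grS_{s,k}(\bfn)\gg 1$. At the archimedean place, the adjective ``primitive'' should, in the Arkhipov tradition appropriate to the Hilbert-Kamke problem, be interpreted as requiring strictly positive coordinates; this is precisely the condition demanded by Theorem \ref{theorem1.1} to conclude $\grJ_{s,k}(\bfn)\gg 1$, and it is natural in view of the fact that the singular integral defined in (\ref{1.7}) is built from the generating function $I(\bfbet;1)$ whose integration range $[0,1]$ already enforces positivity. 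With both lower bounds in hand one deduces $A_{s,k}(\bfn)\gg X^{s-k(k+1)/2}$.

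The final step is to replace $X$ by $n_1$. I would invoke the H\"older-type constraints recorded in the paragraph after Theorem \ref{theorem1.1}: the existence of a real solution with non-negative coordinates compels $n_l^{j/l}\le n_j\le s^{1-j/l}n_l^{j/l}$ for $1\le j\le l\le k$. Specializing to $j=1$ yields $n_l^{1/l}\le n_1$ for every $1\le l\le k$, whence $X=\max_{1\le l\le k}n_l^{1/l}=n_1$. Substitution then delivers the claimed bound $A_{s,k}(\bfn)\gg n_1^{s-k(k+1)/2}$. The only non-bookkeeping point is the identification of ``primitive at the infinite place'' with strict positivity of coordinates; no new analytic input is required, since all the deep content has been absorbed into Theorem \ref{theorem1.1}.
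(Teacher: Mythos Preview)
Your proposal is correct and matches the paper's approach: the paper states that Corollary \ref{corollary1.2} is an immediate consequence of Theorem \ref{theorem1.1} via the methods of Arkhipov, and your write-up simply unpacks this, invoking the positivity of $\grJ_{s,k}(\bfn)$ and $\grS_{s,k}(\bfn)$ guaranteed by Theorem \ref{theorem1.1} under the local solubility hypotheses and then using the H\"older constraint $n_l^{1/l}\le n_1$ to identify $X$ with $n_1$. The one interpretive caveat you flag---that ``primitive'' at the archimedean place means positive coordinates---is exactly the reading demanded by the statement of Theorem \ref{theorem1.1}.
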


We establish Theorem \ref{theorem1.1} by applying the Hardy-Littlewood method, utilising 
an estimate for the contribution of the minor arcs going beyond square-root cancellation. 
This estimate is derived in \S2 by adapting the author's work on the asymptotic formula in 
Waring's problem \cite{Woo2012b}, the failure of translation-dilation invariance in the system 
(\ref{1.1}) permitting an additional variable to be extracted and then utilised. Familiar in the 
context of equations, this device is more challenging in the absence of an immediate 
Diophantine interpretation, as when deriving estimates restricted to the minor arcs in a 
Hardy-Littlewood dissection. We launch our application of the circle method in earnest in \S3 
with the discussion of a Hardy-Littlewood dissection. Then, in \S4, we convert the raw 
subconvex minor arc estimate extracted in \S3 from the work of \S2 into one more directly 
applicable to the proof of Theorem \ref{theorem1.1}. Following some pruning in \S5, the 
proof of Theorem \ref{theorem1.1} is concluded in \S6 with a brief analysis of the major arc 
contribution. Finally, in \S7, we make some remarks concerning other cognate problems to 
which our methods are applicable.\par   

Our basic parameter is $X$, a sufficiently large positive number. Whenever $\eps$ appears 
in a statement, either implicitly or explicitly, we assert that the statement holds for each 
$\eps>0$. In this paper, implicit constants in Vinogradov's notation $\ll$ and $\gg$ may 
depend on $\eps$, $k$ and $s$. We make use of vector notation in the form 
$\bfx=(x_1,\ldots,x_r)$, the dimension $r$ depending on the course of the argument. We 
also write $(a_1,\ldots ,a_s)$ for the greatest common divisor of the integers 
$a_1,\ldots ,a_s$, any ambiguity between ordered $s$-tuples and corresponding greatest 
common divisors being easily resolved by context. Finally, we write $\|\tet\|$ for 
$\min\{|\tet-m|:m\in \dbZ\}$.
 
\section{Mean value estimates via shifts} We begin by preparing the infrastructure required 
to describe our novel mean value estimate. When $k\ge 2$, define $f(\bfalp)=f_k(\bfalp;X)$ 
by putting
\begin{equation}\label{2.1}
f_k(\bfalp;X)=\sum_{0\le x\le X}e(\alp_1x+\alp_2x^2+\ldots +\alp_k x^k).
\end{equation}
Then, when $\bfh\in \dbZ^k$ and $\grB\subseteq \dbR$ is measurable, we introduce the 
mean value
\begin{equation}\label{2.2}
I_s(\grB;X;\bfh)=\int_\grB \int_{[0,1)^{k-1}}f_k(\bfalp;X)^se(-\bfalp \cdot \bfh)\d\bfalp ,
\end{equation}
in which $\bfalp \cdot \bfh =\alp_1h_1+\ldots +\alp_kh_k$ and ${\rm d}\bfalp$ denotes 
${\rm d}\alp_1\d\alp_2\cdots \d\alp_k$. Provided that we take 
$X\ge \max\{n_1,n_2^{1/2},\ldots ,n_k^{1/k}\}$, it follows by orthogonality that 
$A_{s,k}(\bfn)=I_s([0,1);X;\bfn)$. We make use of technology associated with Vinogradov's 
mean value theorem. With this in mind, when $t,k\in \dbN$, the parameter $X$ is a positive 
real number, and $\grB\subseteq \dbR$ is measurable, we define
\begin{equation}\label{2.3}
J_{t,k}^*(\grB;X)=\int_\grB \int_{[0,1)^{k-1}}|f_k(\bfalp;X)|^t\d\bfalp .
\end{equation}
Finally, we adopt the convention of writing $s\grB$ for the set $\{ s\alp:\alp \in \grB\}$.

\begin{theorem}\label{theorem2.1} Suppose that $\bfh\in \dbZ^k$ and 
$\grB\subseteq \dbR$ is measurable. Then one has
\[
I_s(\grB;X;\bfh)\ll X^{-1}(\log X)^sJ_{s+1,k}^*(\grB;2X)^{s/(s+1)}
J_{s+1,k}^*(s\grB;X)^{1/(s+1)}.
\]
\end{theorem}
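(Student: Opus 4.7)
The approach is a shift-and-H\"older device modelled on the subconvex mean value estimate the author developed for the asymptotic formula in Waring's problem \cite{Woo2012b}, adapted to the translation-non-invariant Hilbert-Kamke setting. The key structural input is that, unlike Vinogradov's translation-dilation invariant mean value setup, the Hilbert-Kamke polynomial $\bfalp \cdot (x,\ldots,x^k)$ mixes the $\bfalp$-coefficients nontrivially under the shift $x \mapsto x + y$, and this mixing can be exploited via a unipotent change of variables to extract an extra integer shift variable and, with it, the factor $X^{-1}$.

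Concretely, for each integer $y$ with $0 \le y \le X$ I would employ the substitution $\bfalp \mapsto \bfbet(\bfalp, y)$ given by $\bet_i = \sum_{l=i}^{k} \binom{l}{i} y^{l-i} \alp_l$, which is unipotent with Jacobian one and converts $p_\bfalp(u) := \sum_{l=1}^k\alp_l u^l$ into $p_\bfalp(y) + p_\bfbet(u - y)$. Consequently $f_k(\bfalp; X) = e(p_\bfalp(y)) g_y(\bfbet)$, where $g_y(\bfbet) := \sum_{-y \le z \le X-y} e(p_\bfbet(z))$ is an exponential sum on an interval of length $X + 1$ lying in $[-X, X]$. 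Writing the trivial identity $(X+1) I_s(\grB; X; \bfh) = \sum_{y=0}^{X} I_s(\grB; X; \bfh)$, substituting $f_k(\bfalp; X)^s = e(s p_\bfalp(y)) g_y(\bfbet)^s$ into each summand, and changing variables $\bfalp \mapsto \bfbet$, recasts $(X+1) I_s$ as a sum over $y$ of integrals in $\bfbet$ with a phase $e(-\bfbet \cdot \bfu(y, \bfh))$ whose $\bet_1$-coefficient equals $sy - h_1$.

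Next, I would control $g_y(\bfbet)$ by a dyadic decomposition of its support: combining the portions with $z \ge 0$ and $z < 0$ (the latter via the symmetry $\bet_l \mapsto (-1)^l \bet_l$) yields the uniform bound $|g_y(\bfbet)| \ll |f_k(\bfbet; 2X)|$, and carrying this out in each of the $s$ factors of $g_y(\bfbet)^s$ accumulates the factor $(\log X)^s$. H\"older's inequality with exponents $\tfrac{s+1}{s}$ and $s+1$ then separates $s$ copies of $|f_k(\bfbet; 2X)|$, giving $J^*_{s+1,k}(\grB; 2X)^{s/(s+1)}$, from a single copy that absorbs the summation over $y$ together with the phase $e(sy \bet_1)$ inherited from $e(s p_\bfalp(y))$. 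Via a Fourier-dual interpretation, this last assembly is precisely an integration over a set of $\bet_1$ dilated by $s$, producing $J^*_{s+1,k}(s\grB; X)^{1/(s+1)}$. Dividing through by $X+1$ yields the claimed bound.

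The chief obstacle is the H\"older step: ensuring that the dilation $s$ (and not $1$ or $sX$) appears in the second mean value is the quantitative reflection of the identity $\sigma_1(\bfx + y\bfone) = \sigma_1(\bfx) + sy$ for shifts of $s$-tuples, and requires careful handling of the translated $\bet_1$-integration domain inherited from the change of variables. A secondary technicality is verifying that this translation is harmless on account of the $1$-periodicity of the integrand in $\bet_1$ (the coefficient $sy - h_1$ being an integer, so that the relevant exponentials and the sum $g_y(\bfbet)$ are all $1$-periodic in $\bet_1$).
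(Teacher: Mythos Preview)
Your overall strategy is the same as the paper's, and the unipotent change of variables you describe is equivalent to the orthogonality manoeuvre the paper carries out in (2.10)--(2.12). However, there is a genuine gap in your handling of the $y$-dependent range of $g_y(\bfbet)$.

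The asserted pointwise bound $|g_y(\bfbet)|\ll |f_k(\bfbet;2X)|$ is false: an exponential sum over a subinterval of $[-X,X]$ is not in general dominated by a specific exponential sum over $[0,2X]$, regardless of any dyadic splitting or the reflection $\bet_l\mapsto(-1)^l\bet_l$. More seriously, the order of operations you propose cannot work even in principle. If you replace $|g_y(\bfbet)|^s$ by something independent of $y$ \emph{before} summing over $y$, the phase $e(\Phi_y)$ has modulus $1$ and the sum over $y$ contributes a factor $X$, so you recover only $I_s\ll(\log X)^sJ^*_{s,k}(\grB;2X)$ and lose the crucial $X^{-1}$. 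The gain of $X^{-1}$ comes precisely from cancellation in $\sum_y e(\Phi_y)$, and that cancellation is only available if the non-phase part of the integrand is genuinely independent of $y$ \emph{before} you take absolute values.

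The paper achieves this by extending the summation to the fixed range $[0,2X]$ via a Fourier detector with a $y$-independent kernel $K(\gam)=\sum_{0\le z\le X}e(-\gam z)$; after the shift, the entire $y$-dependence sits in a multiplicative phase $\grg_y$. One sums over $y$ first, obtaining an exponential sum $G$ in $y$ whose degree-$k$ coefficient is $s\alp_k$, and only then applies H\"older. The $(\log X)^s$ arises from $\int_{[0,1)^s}|\Ktil(\bfgam)|\,\d\bfgam$, not from a dyadic decomposition. Your framework can be repaired in exactly this way: write $g_y(\bfbet)=\int_0^1\bigl(\sum_{|z|\le X}e(p_\bfbet(z)+\gam z)\bigr)e(\gam y)K(\gam)\,\d\gam$, so that the $y$-dependence of the kernel is a pure phase absorbed into $\Phi_y$. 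Finally, note that the dilation $s\grB$ arises because the coefficient of $y^k$ in the phase is $s\bet_k=s\alp_k$ (the variable ranging over $\grB$), not from the $\bet_1$-coefficient as you state.
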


\begin{proof} Our strategy is based on that underlying the proof of 
\cite[Theorem 2.1]{Woo2012b}, in which the potential for translation-invariance is exploited 
in order to generate an additional variable. Write
\[
\psi(u;\bftet)=\tet_1u+\tet_2u^2+\ldots +\tet_ku^k.
\]
Then for every integral shift $y$ with $0\le y\le X$, one has
\begin{equation}\label{2.4}
f_k(\bfalp;X)=\sum_{y\le x\le X+y}e\left( \psi(x-y;\bfalp)\right) .
\end{equation}
Next write
\begin{equation}\label{2.5}
\grf_y(\bfalp;\gam)=\sum_{0\le x\le 2X}e\left(\psi(x-y;\bfalp)+\gam(x-y)\right)
\end{equation}
and
\[
K(\gam)=\sum_{0\le z\le X}e(-\gam z).
\]
Then we deduce from (\ref{2.4}) via orthogonality that when $0\le y\le X$, one has
\begin{equation}\label{2.6}
f_k(\bfalp;X)=\int_0^1\grf_y(\bfalp;\gam)K(\gam)\d\gam .
\end{equation}

\par We move next to substitute (\ref{2.6}) into (\ref{2.2}) so as to exploit the shift of 
variables by $y$. Define
\begin{equation}\label{2.7}
\grF_y(\bfalp;\bfgam)=\prod_{i=1}^s\grf_y(\bfalp;\gam_i),\quad
\Ktil(\bfgam)=\prod_{i=1}^sK(\gam_i),
\end{equation}
and
\begin{equation}\label{2.8}
\calI(\bfgam;y;\bfh)=\int_\grB \int_{[0,1)^{k-1}}\grF_y(\bfalp;\bfgam)e(-\bfalp\cdot 
\bfh)\d\bfalp .
\end{equation}
Then, when $0\le y\le X$, it follows that
\begin{equation}\label{2.9}
I_s(\grB;X;\bfh)=\int_{[0,1)^s}\calI(\bfgam;y;\bfh)\Ktil(\bfgam)\d\bfgam .
\end{equation}
For the sake of concision, write $\d\bfalp_{k-1}$ for $\d\alp_1\d\alp_2\cdots \d\alp_{k-1}$. 
Then by orthogonality, one finds from (\ref{2.5}) and (\ref{2.7}) that
\begin{equation}\label{2.10}
\int_{[0,1)^{k-1}}\grF_y(\bfalp;\bfgam)e(-\bfalp \cdot \bfh)\d\bfalp_{k-1}=
\sum_{0\le \bfx\le 2X}\Del(\alp_k,\bfgam;\bfh,y),
\end{equation}
where $\Del(\alp_k,\bfgam;\bfh,y)$ is equal to
\[
e\biggl( \sum_{i=1}^s\left(\alp_k(x_i-y)^k+\gam_i(x_i-y)\right) -\alp_kh_k\biggr) ,
\]
when
\begin{equation}\label{2.11}
\sum_{i=1}^s(x_i-y)^j=h_j\quad (1\le j\le k-1),
\end{equation}
and otherwise $\Del(\alp_k,\bfgam;\bfh,y)$ is equal to $0$.\par

By applying the binomial theorem, one sees that whenever the system (\ref{2.11}) is 
satisfied for the $s$-tuple $\bfx$, then
\[
\sum_{i=1}^sx_i^j=sy^j+\sum_{l=0}^{j-1}\binom{j}{l}h_{j-l}y^l\quad (1\le j\le k-1),
\]
and
\[
\sum_{i=1}^sx_i^k=sy^k+\sum_{l=1}^{k-1}\binom{k}{l}h_{k-l}y^l+
\sum_{i=1}^s(x_i-y)^k.
\]
Adopt the convention that $h_0=s$ and write
\[
\grg_y(\bfalp;\bfh;\bfgam)=e\biggl( -\sum_{j=1}^k\alp_j\sum_{l=0}^j\binom{j}{l}
h_{j-l}y^l-y\sum_{i=1}^s\gam_i\biggr).
\]
Then it follows from (\ref{2.7}) and (\ref{2.10}) that
\begin{equation}\label{2.12}
\int_{[0,1)^{k-1}}\grF_y(\bfalp;\bfgam)e(-\bfalp \cdot \bfh)\d\bfalp_{k-1}=
\int_{[0,1)^{k-1}}\grF_0(\bfalp;\bfgam)\grg_y(\bfalp ;\bfh;\bfgam)\d\bfalp_{k-1}.
\end{equation}

\par Observe next that as a consequence of (\ref{2.9}), when $X\in \dbN$, one has
\[
(X+1)I_s(\grB;X;\bfh)=\sum_{0\le y\le X}\int_{[0,1)^s}\calI(\bfgam;y;\bfh)
\Ktil(\bfgam)\d\bfgam .
\]
Thus, from (\ref{2.8}) and (\ref{2.12}) we obtain the relation
\begin{equation}\label{2.13}
I_s(\grB;X;\bfh)\ll X^{-1}\int_{[0,1)^s}|H(\bfgam)\Ktil(\bfgam)|\d\bfgam,
\end{equation}
where
\begin{equation}\label{2.14}
H(\bfgam)=\int_\grB \int_{[0,1)^{k-1}}\grF_0(\bfalp;\bfgam)G(\bfalp;\bfh;\bfgam)\d\bfalp ,
\end{equation}
and
\begin{equation}\label{2.15}
G(\bfalp;\bfh;\bfgam)=\sum_{0\le y\le X}\grg_y(\bfalp;\bfh;\bfgam).
\end{equation}

\par We begin the investigation of the relation (\ref{2.13}) by bounding $H(\bfgam)$. Thus, 
by applying H\"older's inequality on the right hand side of (\ref{2.14}), we deduce that
\begin{equation}\label{2.16}
H(\bfgam)\ll I_1^{s/(s+1)}I_2^{1/(s+1)},
\end{equation}
where
\begin{equation}\label{2.17}
I_1=\int_\grB\int_{[0,1)^{k-1}}|\grF_0(\bfalp;\gam)|^{1+1/s}\d\bfalp
\end{equation}
and
\begin{equation}\label{2.18}
I_2=\int_\grB \int_{[0,1)^{k-1}}|G(\bfalp;\bfh;\bfgam)|^{s+1}\d\bfalp .
\end{equation}

\par On recalling (\ref{2.7}), an application of H\"older's inequality to (\ref{2.17}) yields
\[
I_1\le \prod_{i=1}^s\biggl( \int_\grB\int_{[0,1)^{k-1}}
|\grf_0(\bfalp;\gam_i)|^{s+1}\d\bfalp \biggr)^{1/s}.
\]
Moreover, it follows from (\ref{2.1}) and (\ref{2.5}) via a change of variable that
\[
\int_\grB \int_{[0,1)^{k-1}}|\grf_0(\bfalp;\gam_i)|^{s+1}\d\bfalp =
\int_\grB \int_{[0,1)^{k-1}}|f_k(\bfalp;2X)|^{s+1}\d\bfalp .
\]
Thus, in view of (\ref{2.3}), we have $I_1\le J_{s+1,k}^*(\grB;2X)$. Also, for suitable 
polynomials $\nu_j(y;\bfh)$ with leading term $sy^j$ $(1\le j\le k)$, we find from 
(\ref{2.15}) that
\[
G(\bfalp;\bfh;\bfgam)=\sum_{0\le y\le X}e\biggl( -\biggl( \alp_1\nu_1(y;\bfh)+\ldots 
+\alp_k\nu_k(y;\bfh)+y\sum_{i=1}^s\gam_i\biggr) \biggr).
\]
Then it follows from (\ref{2.18}) via a change of variable that
\[
I_2=\int_\grB \int_{[0,1)^{k-1}}|f_k(s\bfalp;X)|^{s+1}\d\bfalp =
s^{-1}J_{s+1,k}^*(s\grB;X).
\]
By substituting these estimates for $I_1$ and $I_2$ into (\ref{2.16}), we obtain the bound
\begin{equation}\label{2.19}
H(\bfgam)\ll \left( J_{s+1,k}^*(\grB;2X)\right)^{s/(s+1)}\left( 
J_{s+1,k}^*(s\grB;X)\right)^{1/(s+1)}.
\end{equation}

\par We are almost at the end of the proof. All that remains is to recall that
\[
\int_0^1|K(\gam)|\d\gam \ll \int_0^1\min \{ X,\|\gam \|^{-1}\}\d\gam \ll \log X,
\]
whence by (\ref{2.7}), we see that
\[
\int_{[0,1)^s}|\Ktil(\bfgam)|\d\bfgam \ll (\log X)^s.
\]
On substituting the latter bound with (\ref{2.19}) into (\ref{2.13}), we conclude that
\begin{align*}
I_s(\grB;X;\bfh)&\ll X^{-1}\biggl( \sup_{\bfgam \in [0,1)^s}|H(\bfgam)|\biggr) 
\int_{[0,1)^s}|\Ktil(\bfgam)|\d\bfgam \\
&\ll X^{-1}J_{s+1,k}^*(\grB;2X)^{s/(s+1)}J_{s+1,k}^*(s\grB;X)^{1/(s+1)}(\log X)^s.
\end{align*}
This completes the proof of the theorem.
\end{proof}

We now extract from Theorem \ref{theorem2.1} an estimate of minor arc type. When 
$1\le Q\le X$, we define a one-dimensional Hardy-Littlewood dissection as follows. We define 
the set of major arcs $\grM(Q)$ to be the union of the arcs
\[
\grM(q,a)=\{ \alp \in [0,1):|q\alp -a|\le QX^{-k}\},
\]
with $0\le a\le q\le Q$ and $(a,q)=1$, and then write $\grm(Q)=[0,1)\setminus \grM(Q)$ for 
the corresponding set of minor arcs.\par

Before announcing the next lemma, we introduce a Weyl exponent relevant to our discussion. 
When $k$ is an integer with $k\ge 2$, we define the exponent $\sig=\sig(k)$ by taking
\[
\sig(k)^{-1}=\begin{cases} 2^{k-1},&\text{when $2\le k\le 5$,}\\
k(k-1),&\text{when $k\ge 6$.}\end{cases}
\]

\begin{lemma}\label{lemma2.2} Suppose that $k\ge 2$ and $1\le Q\le X$. Then uniformly in 
$(\alp_1,\ldots ,\alp_{k-1})\in \dbR^{k-1}$, one has
\[
\sup_{\alp_k\in \grm(Q)}|f_k(\bfalp;X)|\ll X^{1+\eps}Q^{-\sig}.
\]
\end{lemma}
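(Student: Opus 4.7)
The plan is to combine Dirichlet's theorem applied to the leading frequency $\alpha_k$ with standard Weyl-type estimates, exploiting the fact that the classical bounds depend only on $\alpha_k$, so uniformity in $\alpha_1,\ldots,\alpha_{k-1}$ comes essentially for free. As a first step I would apply Dirichlet's theorem to obtain coprime integers $a,q$ with $1\le q\le X^kQ^{-1}$ and $|q\alpha_k-a|<QX^{-k}$. The minor-arc hypothesis $\alpha_k\in\grm(Q)$ rules out $q\le Q$, so $Q<q\le X^kQ^{-1}$; in particular the denominator of this approximation exceeds $Q$.

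For $2\le k\le 5$, I would apply Weyl differencing $k-1$ times to $|f_k(\bfalp;X)|^{2^{k-1}}$. After differencing, the innermost sum becomes a geometric progression of the form $\sum_{x}e(k!\,\alpha_k h_1\cdots h_{k-1}x+C(\bfalp,\bfh))$, where the constant $C$ absorbs every contribution coming from $\alpha_1,\ldots,\alpha_{k-1}$ (and from the lower-order polynomial pieces of the differencing expansion), and is annihilated on taking absolute values. Combining the bound $\min(X,\|k!\,\alpha_k h_1\cdots h_{k-1}\|^{-1})$ for the inner sum with the Dirichlet approximation $Q<q\le X^kQ^{-1}$ and a routine divisor-function estimate yields the classical Weyl bound $|f_k(\bfalp;X)|\ll X^{1+\eps}Q^{-1/2^{k-1}}$, uniformly in $\alpha_1,\ldots,\alpha_{k-1}$.

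For $k\ge 6$ the plan is instead to invoke the Weyl-type estimate consequent on the now-established main conjecture for Vinogradov's mean value theorem \cite{BDG2016, Woo2016, Woo2019}. One performs a few Weyl differencings to reduce to a shorter polynomial sum, then uses the sharp bound for $J_{t,k}$ at $t=k(k+1)/2$ to extract a saving on the denominator $q$; the standard extraction yields $|f_k(\bfalp;X)|\ll X^{1+\eps}(q+X^k|q\alpha_k-a|)^{-1/(k(k-1))}$. Crucially, the mean-value $J_{t,k}$ is translation-invariant in its first $k-1$ coordinates, so after inserting the Dirichlet data for $\alpha_k$ the resulting bound has no dependence on $\alpha_1,\ldots,\alpha_{k-1}$. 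Combined with $q>Q$ this delivers the desired $X^{1+\eps}Q^{-1/(k(k-1))}$.

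The main obstacle is really just bookkeeping: one must verify in both ranges that once enough differencing (or enough mean-value application) has reduced the phase sufficiently, the surviving estimate genuinely depends only on $\alpha_k$, so that the supremum over $(\alpha_1,\ldots,\alpha_{k-1})$ does not degrade the bound. Once uniformity is checked, the quantitative content is exactly that of the classical Weyl inequality and of the Vinogradov Weyl-type bound, and neither requires any new input.
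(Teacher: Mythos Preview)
Your proposal is correct and follows essentially the same route as the paper: apply Dirichlet's theorem to $\alpha_k$ to force $q>Q$, then invoke the classical Weyl inequality for $2\le k\le 5$ and the Vinogradov-type Weyl bound (via the now-proved main conjecture for $J_{t,k}$) for $k\ge 6$, noting in each case that the estimate depends only on the leading coefficient $\alpha_k$. The paper simply cites \cite[Lemma~2.4 and Theorem~5.2]{Vau1997} together with \cite[Theorem~1.5]{Woo2012a} rather than sketching the differencing and translation-invariance arguments, but the content is the same.
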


\begin{proof} Let $\alp_k\in \grm(Q)$. Then by Dirichlet's approximation theorem, there 
exist $a\in \dbZ$ and $q\in \dbN$ with $0\le a\le q\le Q^{-1}X^k$ and $(a,q)=1$ for 
which $|q\alp_k-a|\le QX^{-k}$. Since $\alp_k\not\in \grM(Q)$, we may assume that $q>Q$. 
When $2\le k\le 5$, it now follows from 
Weyl's inequality (see \cite[Lemma 2.4]{Vau1997}) that
\[
f_k(\bfalp;X)\ll X^{1+\eps}(q^{-1}+X^{-1}+qX^{-k})^{2^{1-k}}\ll 
X^{1+\eps}Q^{-\sig(k)}.
\]
On the other hand, when $k\ge 6$, we may apply Vinogradov's methods in their most 
modern incarnations to see that
\[
f_k(\bfalp;X)\ll X^{1+\eps}(q^{-1}+X^{-1}+qX^{-k})^{1/(k(k-1))}\ll 
X^{1+\eps}Q^{-\sig(k)}.
\]
The reader may extract this last conclusion from \cite[Theorem 5.2]{Vau1997} by following 
the proof of \cite[Theorem 1.5]{Woo2012a}, substituting the now confirmed bound 
(\ref{1.3}) for the version of Vinogradov's mean value theorem utilised in the latter.
\end{proof}

\begin{lemma}\label{lemma2.3} Suppose that $\bfh\in \dbZ^k$ and $1\le Q\le X$. Then for 
$s\ge k(k+1)$,
\[
I_s(\grm(Q);X;\bfh)\ll X^{s-\frac{1}{2}k(k+1)+\eps}Q^{-\sig(k)}.
\]
\end{lemma}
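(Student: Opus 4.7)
My approach is to apply Theorem~\ref{theorem2.1} with $\grB=\grm(Q)$ and extract a Weyl-type saving from each of the two mean-value factors on the right-hand side, using Lemma~\ref{lemma2.2} together with the resolution of Vinogradov's main conjecture (\ref{1.3}). In the notation of (\ref{2.3}), the latter supplies $J_{t,k}^*([0,1);X)\ll X^{t-k(k+1)/2+\eps}$ for every integer $t\ge k(k+1)$, using Cauchy--Schwarz on the two adjacent even exponents when $t$ is odd (noting that $k(k+1)$ is itself always even). I may assume $Q\ge s$, for if $1\le Q<s$ then the inequality $|I_s(\grm(Q);X;\bfh)|\le J_{s,k}^*([0,1);X)\ll X^{s-k(k+1)/2+\eps}$ already yields the desired bound since $Q^{-\sig(k)}\ge s^{-\sig(k)}$ is bounded below by a positive constant.

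For $J_{s+1,k}^*(\grm(Q);2X)$, Lemma~\ref{lemma2.2} delivers $|f_k(\bfalp;2X)|\ll X^{1+\eps}Q^{-\sig(k)}$ uniformly in $(\alp_1,\ldots,\alp_{k-1})$ whenever $\alp_k\in\grm(Q)$. Extracting a single such factor, the residual $|f_k|^s$ integral over $\grm(Q)\times[0,1)^{k-1}\subseteq[0,1)^k$ is dominated by $J_{s,k}^*([0,1);2X)\ll X^{s-k(k+1)/2+\eps}$, whence
\[
J_{s+1,k}^*(\grm(Q);2X)\ll X^{s+1-k(k+1)/2+\eps}Q^{-\sig(k)}.
\]

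The second factor $J_{s+1,k}^*(s\grm(Q);X)$ is the main obstacle, since the set $s\grm(Q)$ need not obviously carry a Weyl-type estimate. The key observation is that $s\grm(Q)\pmod 1\subseteq\grm(Q/s)$. Indeed, if $\alp\in\grm(Q)$ and the fractional part $\bet=s\alp-m$ (with $m\in\dbZ$) were to lie in some major arc $\grM(q,a)$ with $1\le q\le Q/s$ and $(a,q)=1$, then $|sq\alp-(a+qm)|=|q\bet-a|\le(Q/s)X^{-k}$; passing to the reduced form $q'=sq/d$, $a'=(a+qm)/d$ with $d=\gcd(sq,a+qm)$ would yield coprime $(a',q')$ with $q'\le sq\le Q$ and $|q'\alp-a'|\le QX^{-k}$, contradicting $\alp\in\grm(Q)$. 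Combined with $1$-periodicity of $|f_k|$ in $\bet_k$ and the decomposition of $s\grm(Q)\subseteq[0,s)$ into its unit-interval translates, this yields $J_{s+1,k}^*(s\grm(Q);X)\le sJ_{s+1,k}^*(\grm(Q/s);X)$, and the bound of the previous paragraph applied with $Q/s$ in place of $Q$ and $X$ in place of $2X$ then gives
\[
J_{s+1,k}^*(s\grm(Q);X)\ll X^{s+1-k(k+1)/2+\eps}Q^{-\sig(k)}.
\]

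Substituting both bounds into Theorem~\ref{theorem2.1}, the powers of $Q$ combine as $Q^{-\sig(k)s/(s+1)}\cdot Q^{-\sig(k)/(s+1)}=Q^{-\sig(k)}$, and the powers of $X$ combine to $X^{s+1-k(k+1)/2}$; absorbing the $X^{-1}(\log X)^s$ prefactor into $X^{\eps}$ then yields
\[
I_s(\grm(Q);X;\bfh)\ll X^{s-k(k+1)/2+\eps}Q^{-\sig(k)},
\]
as required.
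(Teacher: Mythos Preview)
Your proof is correct and follows essentially the same route as the paper: apply Theorem~\ref{theorem2.1}, extract one factor via Lemma~\ref{lemma2.2} and bound the remaining moment by Vinogradov's mean value theorem, and handle $J_{s+1,k}^*(s\grm(Q);X)$ via the inclusion $s\grm(Q)\subseteq\grm(Q/s)\pmod 1$. Your treatment is in fact slightly more careful than the paper's, since you spell out the inclusion argument (which the paper leaves as an ``elementary exercise'') and separately dispose of the case $Q<s$.
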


\begin{proof} Recall the definition (\ref{1.2}) and write $w=k(k+1)/2$. Then, provided that 
$s\ge k(k+1)$, it follows from (\ref{2.3}) via a trivial estimate for $f_k(\bfalp;X)$ that
\[
J_{s+1,k}^*(\grm(Q);2X)\ll X^{s-k(k+1)}\Bigl( \sup_{\alp_k\in \grm(Q)}|f_k(\bfalp;2X)|
\Bigr) J_{w,k}(X).
\]
Since the now confirmed bound (\ref{1.3}) shows that $J_{w,k}(X) \ll X^{w+\eps}$, we 
deduce from Lemma \ref{lemma2.2} that
\begin{equation}\label{2.20}
J_{s+1,k}^*(\grm(Q);2X)\ll X^{1+\eps}Q^{-\sig(k)}\cdot X^{s-\frac{1}{2}k(k+1)+\eps}.
\end{equation}
We bound $J_{s+1,k}^*(s\grm(Q);X)$ in a similar manner, noting that an elementary 
exercise confirms that $s\grm(Q)\subseteq \grm(Q/s)\mmod{1}$. Thus we find that
\[
\sup_{\alp_k\in s\grm(Q)}|f_k(\bfalp;X)|\ll X^{1+\eps}Q^{-\sig(k)},
\]
and just as in the previous discussion, we infer that
\begin{equation}\label{2.21}
J_{s+1,k}^*(s\grm(Q);X)\ll X^{1+\eps}Q^{-\sig(k)}\cdot X^{s-\frac{1}{2}k(k+1)+\eps}.
\end{equation}

\par By substituting the estimates (\ref{2.20}) and (\ref{2.21}) into Theorem 
\ref{theorem2.1}, we obtain
\[
I_s(\grm(Q);X;\bfh)\ll X^{-1}(\log X)^sX^{s+1-\frac{1}{2}k(k+1)+\eps}Q^{-\sig(k)}.
\]
The conclusion of the lemma follows on noting our convention concerning $\eps$.
\end{proof}

\section{The Hardy-Littlewood dissection} In this section we explain our application of the 
Hardy-Littlewood method in pursuit of the asymptotic formula delivered by Theorem 
\ref{theorem1.1}. We now fix $k$ and $X=2\max\{n_1,n_2^{1/2},\ldots ,n_k^{1/k}\}$, and 
henceforth abbreviate the exponential sum $f_k(\bfalp;X)$ introduced in (\ref{2.1}) simply to 
$f(\bfalp)$. When $\grA\subseteq [0,1)^k$ is measurable, we define the mean value 
$T_s(\grA)=T_s(\grA;X;\bfn)$ by
\begin{equation}\label{3.1}
T_s(\grA;X;\bfn)=\int_\grA f(\bfalp)^se(-\bfalp \cdot \bfn)\d\bfalp .
\end{equation}

\par Our application of the Hardy-Littlewood method requires some discussion concerning the 
associated infrastructure. When $1\le Z\le X$, we denote by $\grK(Z)$ the union of the arcs
\[
\grK(q,\bfa;Z)=\{ \bfalp\in [0,1)^k: \text{$|\alp_j-a_j/q|\le ZX^{-j}$ $(1\le j\le k)$}\},
\]
with $1\le q\le Z$, $0\le a_j\le q$ $(1\le j\le k)$ and $(q,a_1,\ldots ,a_k)=1$, and we put 
$\grk(Z)=[0,1)^k\setminus \grK(Z)$. We have already defined a one-dimensional 
Hardy-Littlewood dissection of $[0,1)$ into sets of arcs $\grM=\grM(Q)$ and 
$\grm=\grm(Q)$. We now fix $L=X^{1/(8k^2)}$ and $Q=L^k$, and we define a 
$k$-dimensional set of arcs by taking $\grN=\grK(Q^2)$ and $\grn=\grk(Q^2)$. This 
intermediate Hardy-Littlewood dissection can be refined to obtain a dissection into a narrower 
set of major arcs $\grP=\grK(L)$ and a corresponding set of minor arcs $\grp=\grk(L)$. In 
this latter dissection, for the sake of concision, it is useful to write 
$\grP(q,\bfa)=\grK(q,\bfa;L)$.\par

We partition the set of points $(\alp_1,\ldots ,\alp_k)$ lying in $[0,1)^k$ into four disjoint 
subsets, namely
\begin{align*}
\grW_1&=[0,1)^{k-1}\times \grm,\\
\grW_2&=\left( [0,1)^{k-1}\times \grM\right)\cap \grn\\
\grW_3&=\left( [0,1)^{k-1}\times \grM\right)\cap (\grN\setminus \grP),\\
\grW_4&=\grP.
\end{align*}
We note in this context that $\grP\subseteq [0,1)^{k-1}\times \grM$, since whenever
\[
|\alp_k-a_k/q|\le LX^{-k}\quad \text{and}\quad 1\le q\le L,
\]
one has
\[
|q\alp_k-a_k|\le L^2X^{-k}<QX^{-k}\quad \text{and}\quad 1\le q<Q.
\]
Thus $\grW_4=\left( [0,1)^{k-1}\times \grM\right)\cap \grP$, and it follows that 
$[0,1)^k=\grW_1\cup \ldots \cup \grW_4$. We therefore deduce via orthogonality that
\begin{equation}\label{3.2}
A_{s,k}(\bfn)=T_s\left([0,1)^k\right)=\sum_{i=1}^4 T_s(\grW_i).
\end{equation}

\par The work of \S2 already permits us to announce a satisfactory upper bound for the 
contribution of the set of arcs $\grW_1$ within (\ref{3.2}).

\begin{lemma}\label{lemma3.1} Whenever $\bfn\in \dbZ^k$ and $s\ge k(k+1)$, one has
\[
T_s(\grW_1)\ll X^{s-\frac{1}{2}k(k+1)-1/(8k^3)}.
\]
\end{lemma}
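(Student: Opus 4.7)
The plan is to recognize that $T_s(\grW_1)$ is precisely the mean value $I_s(\grm(Q);X;\bfn)$ introduced in \S2, and then invoke Lemma \ref{lemma2.3} directly, with the only real work being the numerical verification that the savings $Q^{-\sig(k)}$ beats the target $X^{-1/(8k^3)}$ after absorbing the $X^\eps$ factor.

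First I would simply compare the definitions. By (\ref{3.1}) with $\grA=\grW_1=[0,1)^{k-1}\times \grm(Q)$, together with Fubini's theorem and our abbreviation $f(\bfalp)=f_k(\bfalp;X)$, one has
\[
T_s(\grW_1)=\int_{\grm(Q)}\int_{[0,1)^{k-1}}f_k(\bfalp;X)^se(-\bfalp\cdot \bfn)\d\bfalp_{k-1}\d\alp_k,
\]
and comparing this to (\ref{2.2}) yields $T_s(\grW_1)=I_s(\grm(Q);X;\bfn)$. Since $s\ge k(k+1)$, Lemma \ref{lemma2.3} applies with $\bfh=\bfn$ and delivers
\[
T_s(\grW_1)\ll X^{s-\frac{1}{2}k(k+1)+\eps}Q^{-\sig(k)}.
\]

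Next I would substitute the chosen parameters $L=X^{1/(8k^2)}$ and $Q=L^k=X^{1/(8k)}$, so that the exponent of $X$ on the right is $s-\tfrac{1}{2}k(k+1)+\eps-\sig(k)/(8k)$. It then suffices to show $\sig(k)/(8k)\ge 1/(8k^3)+\delta$ for some $\delta=\delta(k)>0$, so that $\eps$ may be chosen small enough to subsume $\delta$. For $k\ge 6$, the definition $\sig(k)=1/(k(k-1))$ gives
\[
\frac{\sig(k)}{8k}-\frac{1}{8k^3}=\frac{1}{8k^2}\left(\frac{1}{k-1}-\frac{1}{k}\right)=\frac{1}{8k^3(k-1)}>0.
\]
For each of the small cases $k=3,4,5$, one checks the inequality $2^{1-k}/(8k)>1/(8k^3)$ directly: it reduces to $k^2>2^{k-1}$, which holds for $k\in\{3,4,5\}$ (the values $9,16,25$ against $4,8,16$). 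Choosing $\eps$ smaller than the corresponding gap $\delta(k)$ yields the desired conclusion.

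The bulk of the proof is thus essentially bookkeeping: matching the two mean-value definitions, invoking Lemma \ref{lemma2.3}, and performing the elementary arithmetic check on the exponents. I do not anticipate any real obstacle; the small safety factor in the choice $L=X^{1/(8k^2)}$ was evidently made precisely so that, after losing a factor of $k$ from $Q=L^k$ and another factor of $\sig(k)^{-1}\le k(k-1)$ from the Weyl saving, one still retains a comfortable positive exponent, and the only mildly fiddly point is confirming that the case $k\in\{3,4,5\}$ (where $\sig(k)$ is governed by Weyl's inequality rather than Vinogradov) is handled by the same $1/(8k^3)$ bound.
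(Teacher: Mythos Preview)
Your proposal is correct and follows essentially the same route as the paper: identify $T_s(\grW_1)$ with $I_s(\grm(Q);X;\bfn)$, apply Lemma~\ref{lemma2.3} with $Q=X^{1/(8k)}$, and verify that $\sig(k)>1/k^2$ for $k\ge 3$ so that the $X^\eps$ loss can be absorbed. The paper states this in a single sentence, while you spell out the case analysis for $k\in\{3,4,5\}$ versus $k\ge 6$, but the argument is the same.
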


\begin{proof} By substituting $Q=X^{1/(8k)}$ into Lemma \ref{lemma2.3}, noting that 
$\sig(k)>1/k^2$ for $k\ge 3$, the conclusion of the lemma is immediate.
\end{proof}

\section{Another minor arc estimate} Our next task is to bound the contribution of the set of 
arcs $\grW_2$ within (\ref{3.2}). We begin by providing a familiar estimate of Weyl 
type for the exponential sum $f(\bfalp)$ of strength sufficient for our purposes.

\begin{lemma}\label{lemma4.1} One has 
\[
\sup_{\bfalp \in \grn}|f(\bfalp)|\ll X^{1-1/(6k^2)}\quad \text{and}\quad 
\sup_{\bfalp \in \grp}|f(\bfalp)|\ll X^{1-1/(12k^3)}.
\]
\end{lemma}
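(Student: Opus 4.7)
The plan is to combine Dirichlet's approximation theorem applied to the leading coefficient $\alp_k$ with Lemma~\ref{lemma2.2}, supplemented in one regime by a residue-class decomposition of $f(\bfalp)$ that exploits the multidimensional structure of $\grn$ and $\grp$.

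For the bound on $\grn$, I would fix $\bfalp\in\grn$ and apply Dirichlet's theorem to $\alp_k$ with parameter $Q_0=X^kQ^{-2}$, obtaining a coprime pair $(a_k,q)$ with $q\le Q_0$ and $|q\alp_k-a_k|\le Q^2X^{-k}$. If $q$ exceeds a suitable cutoff $Q'$, then $\alp_k$ lies in the one-dimensional minor arcs $\grm(Q')$, and Lemma~\ref{lemma2.2} supplies $|f(\bfalp)|\ll X^{1+\eps}(Q')^{-\sig(k)}$; adjusting $Q'$ (and the Dirichlet parameter) ensures this matches $X^{1-1/(6k^2)}$. If instead $q$ lies below the cutoff, the hypothesis $\bfalp\in\grn$ forces the failure of the multidimensional approximation: no choice of $\bfa$ with $k$-th coordinate $a_k$ places $\bfalp$ in $\grK(q,\bfa;Q^2)$, so there exists $j<k$ with $\|q\alp_j\|>qQ^2X^{-j}$. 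Writing $\alp_j=b_j/q+\gam_j$ for $b_j$ the nearest integer to $q\alp_j$ and performing the substitution $x=qy+r$ with $0\le r<q$ decomposes $f(\bfalp)$ into an outer sum over residues $r$ of shorter inner exponential sums $\sum_y e(\sum_j\gam_j(qy+r)^j)$ of length $O(X/q)$. A second application of Weyl's inequality to these inner sums---whose leading coefficient inherits a non-trivial Dirichlet approximation from the failure at index $j$---delivers the needed cancellation after summing the trivial estimate on the outer $r$-sum.

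The bound on $\grp$ follows by the same argument with $Q^2$ replaced throughout by $L$, yielding the weaker exponent $1/(12k^3)$. The main obstacle is the second case: Lemma~\ref{lemma2.2} alone is insufficient for $k\ge 4$ (direct application gives only $X^{-\sig(k)/(4k)}$, which falls short of $X^{-1/(6k^2)}$), so the multidimensional structure of $\grn$ must be genuinely invoked, and converting the failure of multi-dim approximability into an effective bound on $f(\bfalp)$ via the residue decomposition requires careful tracking of the inner Weyl estimate against both the denominator $q$ and the failing index $j$.
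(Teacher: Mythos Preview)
Your dichotomy has a genuine gap in the range of denominators it covers. To make Case~1 (large Dirichlet denominator $q>Q'$) deliver the saving $X^{-1/(6k^2)}$ via Lemma~\ref{lemma2.2}, you need $(Q')^{\sig(k)}\gg X^{1/(6k^2)}$; since $\sig(k)=1/(k(k-1))$ for $k\ge 6$, this forces $Q'\asymp X^{(k-1)/(6k)}$, far larger than $Q^2=X^{1/(4k)}$. But in Case~2 you invoke $\bfalp\in\grn=[0,1)^k\setminus\grK(Q^2)$ to force failure of the $k$-dimensional approximation at the specific denominator $q$ just produced. That deduction is valid only when $q\le Q^2$, because the arcs $\grK(q,\bfa;Q^2)$ comprising $\grN$ are defined solely for $q\le Q^2$; when $Q^2<q\le Q'$, membership in $\grn$ says nothing whatsoever about approximability of $\bfalp$ with denominator $q$. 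Your case split therefore leaves the entire range $Q^2<q\le Q'$ untreated, and for $k\ge 4$ this range is genuinely present (as you yourself note, taking $Q'=Q^2$ makes Case~1 too weak).

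Even in the honest small-denominator regime $q\le Q^2$, the residue-class step is underspecified: after $x=qy+r$, the coefficient of $y^j$ in the inner polynomial is $\sum_{l\ge j}\binom{l}{j}\alp_l q^j r^{l-j}$, a mixture of $\alp_j,\ldots,\alp_k$, so the single lower bound $\|q\alp_j\|>qQ^2X^{-j}$ does not by itself supply a Diophantine hypothesis on this coefficient uniformly in $r$. Disentangling these coefficients is precisely the content of the multidimensional Weyl-type transference principle that the paper simply quotes: by \cite[Theorem~1.6]{Woo2012a}, the inequality $|f(\bfalp)|\ge X^{1-\tau}$ with $\tau^{-1}>4k(k-1)$ already forces $\bfalp\in\grK(X^\del)\mmod 1$ for any $\del>k\tau$, and the choices $(\tau,\del)=(1/(6k^2),1/(4k))$ and $(1/(12k^3),1/(8k^2))$ give the two bounds immediately by contraposition. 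Your route amounts to re-deriving this theorem from first principles, and the sketch does not do so.
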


\begin{proof} In order to confirm the first bound, we put $\tau=1/(6k^2)$ and 
$\del=1/(4k)$. Since $\tau^{-1}>4k(k-1)$ and $\del>k\tau$, we find from 
\cite[Theorem 1.6]{Woo2012a} that whenever $|f(\bfalp)|\ge X^{1-\tau}$, there exist 
integers $q,a_1,\ldots ,a_k$ such that $1\le q\le X^\del$ and $|q\alp_j-a_j|\le X^{\del-j}$ 
$(1\le j\le k)$. In particular, we see that $q\le Q^2$ and $|\alp_j-a_j/q|\le Q^2X^{-j}$ 
$(1\le j\le k)$, and hence $\bfalp \in \grN\mmod{1}$. We therefore infer that whenever 
$X$ is sufficiently large in terms of $k$, and $\bfalp\in \grn$, then one must have 
$|f(\bfalp)|\le X^{1-\tau}$, and the first conclusion of the lemma follows.\par

For the second bound we put $\tau=1/(12k^3)$ and $\del=1/(8k^2)$. We again have 
$\tau^{-1}>4k(k-1)$ and $\del>k\tau$, and so the same argument applies {\it mutatis 
mutandis}. We now find that whenever $|f(\bfalp)|\ge X^{1-\tau}$, then 
$\bfalp \in \grP\mmod{1}$. Thus, when $X$ is sufficiently large in terms of $k$, and 
$\bfalp\in \grp$, then one must have $|f(\bfalp)|\le X^{1-\tau}$, and the second conclusion 
of the lemma follows.
\end{proof}

In order to obtain a satisfactory bound for the contribution of $\grW_2$, it suffices to 
combine the Weyl estimate provided by Lemma \ref{lemma4.1} with the observation that 
$\grW_2$ has small measure.

\begin{lemma}\label{lemma4.2} Whenever $\bfn\in \dbZ^k$ and $s\ge k(k+1)$, one has
\[
T_s(\grW_2)\ll X^{s-\frac{1}{2}k(k+1)-1/(16k)}.
\]
\end{lemma}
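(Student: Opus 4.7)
The plan is to combine the pointwise Weyl-type bound from Lemma~\ref{lemma4.1} --- applicable on $\grW_2$ because $\grW_2\subseteq \grn$ --- with the observation that $\grW_2$ has small Lebesgue measure in the $\alp_k$ direction. Write $w=\tfrac12 k(k+1)$. From Lemma~\ref{lemma4.1} I would record the uniform bound $|f(\bfalp)|\ll X^{1-1/(6k^2)}$ for $\bfalp\in \grW_2$, and from the containment $\grW_2\subseteq [0,1)^{k-1}\times \grM$ together with a standard count of the arcs $\grM(q,a)$ with $q\le Q=X^{1/(8k)}$, I would record that $\mathrm{meas}(\grW_2)\le \mathrm{meas}(\grM)\ll Q^3X^{-k}=X^{3/(8k)-k}$.

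The cleanest estimate is to reserve exactly $2w$ factors of $|f|$ for Vinogradov's main conjecture --- so that $J_{w,k}(X)\ll X^{w+\eps}$ by~(\ref{1.3}) --- and to subject the remaining $s-2w$ factors to the pointwise Weyl estimate, giving
\[
|T_s(\grW_2)|\le \int_{\grW_2}|f|^s\,\d\bfalp \le \bigl(\sup_{\grn}|f|\bigr)^{s-2w}J_{w,k}(X) \ll X^{s-w-(s-2w)/(6k^2)+\eps}.
\]
This already beats the target $X^{s-w-1/(16k)}$ once $s-2w\ge 3k/8$, covering most of the hypothesised range $s\ge k(k+1)$.

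For the boundary where $s$ is close to $k(k+1)$, the Weyl savings alone do not amount to $1/(16k)$, so some of the measure of $\grW_2$ must be spent. I would interpolate through H\"older's inequality: with $a+b=s$, $1/p+1/q=1$, and $bp\ge 2w$,
\[
\int_{\grW_2}|f|^s\,\d\bfalp \le \bigl(\sup_{\grn}|f|\bigr)^a \Bigl(\int_{[0,1)^k}|f|^{bp}\,\d\bfalp\Bigr)^{\!1/p}\mathrm{meas}(\grW_2)^{1/q},
\]
and then control the middle factor by $J_{bp/2,k}(X)\ll X^{bp-w+\eps}$ via the main conjecture. Substituting the Weyl bound and measure estimate produces an exponent of shape $s-w-a/(6k^2)+(w-k+3/(8k))/p+O(\eps)$, and the final task is to optimise over $a$ and $p$ (subject to the Vinogradov constraint $bp\ge 2w$) so that the negative contributions exceed $1/(16k)$.

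The main obstacle is this tight arithmetic at $s=k(k+1)$: the Vinogradov constraint caps the number of factors available to the Weyl bound, and the small measure of $\grW_2$ must supply the remaining savings. The choices $L=X^{1/(8k^2)}$ and $Q=L^k$ made in \S3 are calibrated precisely so that the measure exponent $k-O(1/k)$, the Weyl saving $1/(6k^2)$ per factor, and Vinogradov's main term at the critical exponent $w$ conspire to leave the required $1/(16k)$ in hand when $s\ge k(k+1)$.
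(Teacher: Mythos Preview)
Your H\"older optimisation does not close at $s=k(k+1)$. Tracking the exponent in your interpolation (with the minor correction that the positive term should carry a factor $1/q$, not $1/p$) gives
\[
\int_{\grW_2}|f|^s\d\bfalp \ll X^{s-w-a/(6k^2)+(w-k+3/(8k))/q+\eps}.
\]
To win you need $a/(6k^2)\ge (w-k+3/(8k))/q+1/(16k)$. But the Vinogradov constraint $bp\ge 2w$ with $b=s-a$ forces $1/q\ge (a-(s-2w))/(2w)$, and at $s=2w=k(k+1)$ this is $1/q\ge a/(2w)$. Hence the positive term is at least $(w-k)a/(2w)=\tfrac{k-1}{2(k+1)}\,a$, which for every $k\ge 3$ dominates $a/(6k^2)$; the bracket $[\,1/(6k^2)-(w-k+3/(8k))/(2w)\,]$ is negative, so no choice of $a\ge 0$ yields even an $\eps$ of saving, let alone $1/(16k)$. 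The underlying reason is that the $k$-dimensional measure of $\grW_2$ is only $\asymp Q^2X^{-k}$, far larger than $X^{-w}$, so trading moments of $|f|$ for measure via the $k$-dimensional mean value $J_{w,k}$ is a losing exchange.

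The missing idea is to separate the $\alp_k$-integration from the rest. Since $\grW_2\subseteq [0,1)^{k-1}\times\grM$, one may write
\[
T_s(\grW_2)\le \Bigl(\sup_{\grn}|f|\Bigr)^{s-k(k-1)}\int_\grM\biggl(\int_{[0,1)^{k-1}}|f|^{k(k-1)}\d\alp_1\cdots\d\alp_{k-1}\biggr)\d\alp_k,
\]
and observe that the inner integral, by orthogonality, is bounded uniformly in $\alp_k$ by the $(k-1)$-dimensional Vinogradov mean value $J_{v,k-1}(X)\ll X^{v+\eps}$ with $v=k(k-1)/2$. This costs only $2v=k(k-1)$ moments rather than $2w=k(k+1)$, freeing $2k$ copies of $f$ for the Weyl bound; combined with $\text{mes}(\grM)\ll Q^2X^{-k}$ one obtains a saving of $1/(3k)-1/(4k)=1/(12k)$, comfortably exceeding $1/(16k)$.
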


\begin{proof} We begin with an auxiliary estimate. Recall the definition (\ref{1.2}) and write 
$v=k(k-1)/2$. Then, by orthogonality, the mean value
\[
I(\alp_k)=\int_{[0,1)^{k-1}}|f_k(\bfalp;X)|^{2v}\d\bfalp_{k-1} 
\]
counts the integral solutions of the system of equations
\[
\sum_{i=1}^v(x_i^j-y_i^j)=0\quad (1\le j\le k-1),
\]
with $1\le \bfx,\bfy\le X$, each solution being counted with the unimodular weight 
$e\left( \alp_k(x_1^k-y_1^k+\ldots +x_v^k-y_v^k)\right)$. By making use of the bound 
(\ref{1.3}) (see \cite{BDG2016, Woo2016, Woo2019}), we therefore find that 
$I(\alp_k)\ll J_{v,k-1}(X)\ll X^{v+\eps}$, uniformly in $\alp_k$.\par

Observe next that since $\grW_2=\left( [0,1)^{k-1}\times \grM\right)\cap \grn$, it follows 
from (\ref{3.1}) via the triangle inequality that
\[
T_s(\grW_2)\ll \Bigl( \sup_{\bfalp \in \grn}|f(\bfalp)|\Bigr)^{s-k(k-1)}\int_\grM 
I(\alp_k)\d\alp_k.
\]
Since $\text{mes}(\grM)\ll Q^2X^{-k}$, we conclude from Lemma \ref{lemma4.1} that
\begin{align*}
T_s(\grW_2)&\ll X^{s-k(k+1)}\Bigl( X^{1-1/(6k^2)}\Bigr)^{2k}X^{v+\eps}
\text{mes}(\grM)\\
&\ll \left( Q^2X^{-1/(3k)}\right) X^{s-\frac{1}{2}k(k+1)+\eps}.
\end{align*}
Since $Q^2=X^{1/(4k)}$, the conclusion of the lemma is immediate.
\end{proof}

\section{Pruning: the analysis of $\grW_3$} We take an economical approach to the analysis 
of the term $T_s(\grW_3)$. We begin by announcing a mean value estimate of major arc 
type.

\begin{lemma}\label{lemma5.1}
Suppose that $u>\tfrac{1}{2}k(k+1)+2$. Then one has
\[
\int_\grN |f(\bfalp)|^u\d\bfalp \ll_u X^{u-k(k+1)/2}.
\]
\end{lemma}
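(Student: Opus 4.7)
The plan is to apply the classical major-arc approximation for $f(\bfalp)$ on each of the narrow arcs $\grK(q,\bfa;Q^2)$ comprising $\grN$, and then factorize the resulting integral into a truncated singular series and a truncated singular integral, both of which are known to converge absolutely precisely under the hypothesis $u>\tfrac{1}{2}k(k+1)+2$.

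First, for $\bfalp=\bfa/q+\bfbet \in \grK(q,\bfa;Q^2)$ with $1\le q\le Q^2$ and $|\bet_j|\le Q^2X^{-j}$, I would invoke the standard approximation (e.g.\ \cite[Theorem 7.2]{Vau1997}), which in this setting yields
\[
f(\bfalp)=q^{-1}S(q,\bfa)I(\bfbet;X)+O\bigl(q(1+X^k\max_j|\bet_j|)\bigr)
=q^{-1}S(q,\bfa)I(\bfbet;X)+O(Q^4).
\]
Applying the inequality $|A+B|^u\ll |A|^u+|B|^u$ on each arc gives
\[
\int_\grN |f(\bfalp)|^u \d\bfalp \ll \Sigma_1+\Sigma_2,
\]
where $\Sigma_1$ is the main term built from the approximation and $\Sigma_2$ is the error contribution, bounded by $Q^{4u}\cdot \mathrm{mes}(\grN) \ll Q^{4u+4k+2}X^{-k(k+1)/2}$. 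Since $Q=X^{1/(8k)}$ is a very small positive power of $X$, this error is easily absorbed into $X^{u-k(k+1)/2-\del}$ for some $\del>0$.

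For the main term, after collecting the $\bfalp$-integral into one over each local box $\{|\bet_j|\le Q^2 X^{-j}\}$, I would write
\[
\Sigma_1 \le \Biggl(\sum_{q=1}^\infty q^{-u} \sum_{\substack{1\le \bfa\le q\\(q,\bfa)=1}} |S(q,\bfa)|^u\Biggr) \int_{\dbR^k}|I(\bfbet;X)|^u \d\bfbet.
\]
The change of variables $\gam_j=X^j\bet_j$ converts the integral to
\[
X^{u-k(k+1)/2}\int_{\dbR^k}|I(\bfgam;1)|^u\d\bfgam,
\]
and the latter integral converges absolutely for $u>\tfrac{1}{2}k(k+1)+1$, while the outer sum (a majorant for $\grS_{u,k}(\bfn)$) converges absolutely for $u>\tfrac{1}{2}k(k+1)+2$; both facts are classical and recorded in \cite[Theorem 1]{Ark1984} and \cite[Theorem 3.7]{AKC2004}. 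Combining these two convergences gives $\Sigma_1 \ll_u X^{u-k(k+1)/2}$, completing the proof.

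The only subtlety is the quality of the pointwise approximation: one must ensure that the polynomial-in-$Q$ error from the Weyl sum approximation, raised to the $u$-th power and integrated, remains negligible against the main term. This is where the extremely conservative choice $Q=X^{1/(8k)}$ pays off, since even $Q^{4u+4k+2}$ is a tiny power of $X$. No genuinely new estimate is required beyond the absolute convergence of the singular series and the singular integral at the critical threshold $u>\tfrac12 k(k+1)+2$.
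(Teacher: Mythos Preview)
Your argument is correct and is essentially the standard direct proof that underlies the result being cited. The paper, by contrast, disposes of this lemma in one sentence by invoking \cite[Lemma~7.1]{Woo2017} and observing that the major arcs $\grN$ here (built with parameter $Q^2=X^{1/(4k)}$) are contained in the wider major arcs used in that reference (built with parameter $X^{1/(2k)}$), so the bound is inherited automatically. Your approach has the merit of being self-contained modulo the classical convergence results for the singular series and singular integral, while the paper's route keeps the exposition short; the underlying content is the same.

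One small slip worth correcting: the error term you quote from \cite[Theorem~7.2]{Vau1997} is misstated. The correct bound is
\[
f(\bfalp)-q^{-1}S(q,\bfa)I(\bfbet;X)\ll q+\sum_{j=1}^k X^j|q\bet_j|,
\]
not $O\bigl(q(1+X^k\max_j|\bet_j|)\bigr)$; the latter would be of size $Q^4X^{k-1}$ on $\grN$ and would swamp everything. With the correct form, and since $q\le Q^2$ and $|\bet_j|\le Q^2X^{-j}$ on $\grN$, one does obtain the $O(Q^4)$ error you subsequently use, and the rest of your argument goes through unchanged.
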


\begin{proof} This is essentially \cite[Lemma 7.1]{Woo2017}. Our definition of the major 
arcs $\grN$ sets the parameter $Q$ equal to $X^{1/(8k)}$, whereas in the source cited the 
analogous definition is tantamount to setting $Q=X^{1/(2k)}$. The conclusion presented here 
is therefore immediate from the latter source, since our major arcs are contained in those 
employed therein.
\end{proof}

\begin{lemma}\label{lemma5.2} When $\bfn\in \dbZ^3$ and $s\ge k(k+1)$, one has
\[
T_s(\grW_3)\ll X^{s-\frac{1}{2}k(k+1)-1/(12k^3)}.
\]
\end{lemma}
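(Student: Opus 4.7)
The plan is a routine pruning argument, combining the sharper Weyl bound on $\grp$ supplied by Lemma \ref{lemma4.1} with the major arc mean value bound on $\grN$ supplied by Lemma \ref{lemma5.1}. The key observation is that
\[
\grW_3=\left([0,1)^{k-1}\times \grM\right)\cap (\grN\setminus \grP)\subseteq \grN\cap \grp,
\]
so any point of $\grW_3$ simultaneously lies in the domain of Lemma \ref{lemma5.1} and admits the pointwise bound of the second half of Lemma \ref{lemma4.1}.

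First I would fix a convenient auxiliary exponent, say $u=\lceil \tfrac{1}{2}k(k+1)\rceil +3$, so that Lemma \ref{lemma5.1} applies with this $u$. One checks that whenever $k\ge 3$ and $s\ge k(k+1)$ one has $s-u\ge \tfrac{1}{2}k(k+1)-3\ge 3$, so in particular $s\ge u$ and $s-u\ge 1$. Next, from (\ref{3.1}) and the triangle inequality,
\[
|T_s(\grW_3)|\le \Bigl( \sup_{\bfalp\in \grp}|f(\bfalp)|\Bigr)^{s-u}\int_\grN |f(\bfalp)|^u\d\bfalp .
\]
Applying Lemma \ref{lemma4.1} to the supremum and Lemma \ref{lemma5.1} to the integral yields
\[
T_s(\grW_3)\ll \bigl( X^{1-1/(12k^3)}\bigr)^{s-u}\cdot X^{u-k(k+1)/2}
=X^{s-\frac{1}{2}k(k+1)-(s-u)/(12k^3)}.
\]
Since $s-u\ge 1$, this immediately delivers the claimed bound $T_s(\grW_3)\ll X^{s-\frac{1}{2}k(k+1)-1/(12k^3)}$.

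There is no substantive obstacle: the argument is the textbook "peel off one pointwise factor, bound the remaining moment on the major arcs" routine. The only small bookkeeping point, which I would highlight explicitly, is the verification that one can afford to pull out $s-u\ge 1$ copies of the Weyl bound while keeping the $u$-th moment large enough that Lemma \ref{lemma5.1} is applicable; both requirements are accommodated comfortably by the hypothesis $s\ge k(k+1)$ together with $k\ge 3$, so no further care is needed.
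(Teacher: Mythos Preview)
Your proof is correct and follows essentially the same route as the paper: observe $\grW_3\subseteq \grN\cap\grp$, then combine the pointwise Weyl bound on $\grp$ from Lemma \ref{lemma4.1} with the major arc moment from Lemma \ref{lemma5.1}. The only cosmetic difference is that the paper pulls off a single copy of the Weyl bound (using the trivial estimate $|f|\le X$ on the remaining $s-u-1$ factors), whereas you pull off $s-u$ copies; both choices suffice.
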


\begin{proof} Since $\grW_3\subseteq \grN\setminus \grP$, we have
\[
\sup_{\bfalp\in \grW_3}|f(\bfalp)|\le \sup_{\bfalp \in \grp}|f(\bfalp)|.
\]
Thus, taking $u=\tfrac{1}{2}k(k+1)+3$, it follows from the triangle inequality that
\[
T_s(\grW_3)\le X^{s-u-1}\Bigl( \sup_{\bfalp\in \grp}|f(\bfalp)|\Bigr) \int_\grN 
|f(\bfalp)|^u\d\bfalp .
\]
Hence, by employing Lemmata \ref{lemma4.1} and \ref{lemma5.1} we see that
\[
T_s(\grW_3)\ll X^{s-1-\frac{1}{2}k(k+1)}\cdot X^{1-1/(12k^3)},
\]
and the conclusion of the lemma follows.
\end{proof}

\section{The major arc contribution} By substituting the conclusions of Lemmata 
\ref{lemma3.1}, \ref{lemma4.2} and \ref{lemma5.2} into (\ref{3.2}), we find that 
whenever $s\ge k(k+1)$, one has
\begin{equation}\label{6.1}
A_{s,k}(\bfn)=T_s(\grW_4)+o(X^{s-\frac{1}{2}k(k+1)}).
\end{equation}
The proof of Theorem \ref{theorem1.1} will be completed by an analysis of 
$T_s(\grW_4)=T_s(\grP)$.\par

Recall (\ref{1.5}) and (\ref{1.6}). Then, when $\bfalp\in \grP(q,\bfa)\subseteq \grP$, write
\[
V(\bfalp;q,\bfa)=q^{-1}S(q,\bfa)I(\bfalp-\bfa/q;X).
\]
Define the function $V(\bfalp)$ to be $V(\bfalp;q,\bfa)$ when $\bfalp\in \grP(q,\bfa)
\subseteq \grP$, and to be $0$ otherwise. Then, when 
$\bfalp\in \grP(q,\bfa)\subseteq \grP$, we see from \cite[Theorem 7.2]{Vau1997} that
\[
f(\bfalp)-V(\bfalp;q,\bfa)\ll q+X|q\alp_1-a_1|+\ldots +X^k|q\alp_k-a_k|\ll L^2,
\]
whence, uniformly for $\bfalp\in \grP$, we have the bound
\[
f(\bfalp)^s-V(\bfalp)^s\ll X^{s-1+1/(4k^2)}.
\]
Thus, since $\text{mes}(\grP)\ll L^{2k+1}X^{-k(k+1)/2}$, we deduce that
\begin{equation}\label{6.2}
\int_\grP f(\bfalp)^se(-\bfalp \cdot \bfn)\d\bfalp =\int_\grP V(\bfalp)^s
e(-\bfalp \cdot \bfn)\d\bfalp +o(X^{s-\frac{1}{2}k(k+1)}).
\end{equation}

\par Next, write
\[
\Ome=[-LX^{-1},LX^{-1}]\times \cdots \times [-LX^{-k},LX^{-k}].
\]
Then one finds that
\begin{equation}\label{6.3}
\int_\grP V(\bfalp)^se(-\bfalp \cdot \bfn)\d\bfalp =\grS(X)\grJ(X),
\end{equation}
where
\[
\grJ(X)=\int_\Ome I(\bfbet ;X)^se(-\bfbet \cdot \bfn)\d\bfbet
\]
and
\[
\grS(X)=\sum_{1\le q\le L}\sum_{\substack{1\le \bfa\le q\\ (q,a_1,\ldots ,a_k)=1}}
q^{-s}S(q,\bfa)^se_q(-\bfa \cdot \bfn).
\]

\par When $s>\tfrac{1}{2}k(k+1)+1$, the singular integral $\grJ_{s,k}(\bfn)$ defined in 
(\ref{1.7}) converges absolutely (see \cite[Theorem 1.3]{AKC2004}), and in particular 
$\grJ_{s,k}(\bfn)\ll 1$. Also, by \cite[Theorem 7.3]{Vau1997}, one has
\[
I(\bfbet;X)\ll X(1+|\bet_1|X+\ldots +|\bet_k|X^k)^{-1/k}.
\]
Write $h_j=n_jX^{-j}$ $(1\le j\le k)$. Then after two changes of variable, we obtain
\begin{align}
\grJ(X)&=X^{s-\frac{1}{2}k(k+1)}\int_{\dbR^k}I(\bfbet;1)^se(-\bfbet \cdot \bfh)\d\bfbet 
+o(X^{s-\frac{1}{2}k(k+1)})\notag \\
&=\left( \grJ_{s,k}(\bfn)+o(1)\right) X^{s-\frac{1}{2}k(k+1)}.\label{6.4}
\end{align}

\par Similarly, by reference to \cite[Theorem 2.4]{AKC2004}, one sees that the singular series 
$\grS_{s,k}(\bfn)$ defined in (\ref{1.8}) converges absolutely for $s>\tfrac{1}{2}k(k+1)+2$, 
and in particular $\grS_{s,k}(\bfn)\ll 1$. Also, it follows from \cite[Theorem 7.1]{Vau1997} 
that when $(q,a_1,\ldots ,a_k)=1$, one has $S(q,\bfa)\ll q^{1-1/k+\eps}$. Thus,
\begin{equation}\label{6.5}
\grS(X)=\grS_{s,k}(\bfn)+o(1).
\end{equation}

\par By substituting (\ref{6.4}) and (\ref{6.5}) into (\ref{6.3}), and thence into (\ref{6.2}), 
we obtain
\[
\int_\grP f(\bfalp)^se(-\bfalp \cdot \bfn)\d\bfalp =\grS_{s,k}(\bfn)\grJ_{s,k}(\bfn)
X^{s-\frac{1}{2}k(k+1)}+o(X^{s-\frac{1}{2}k(k+1)}).
\]
Thus, on recalling (\ref{3.1}), we find that
\[
T_s(\grW_4)=T_s(\grP)=\grS_{s,k}(\bfn)\grJ_{s,k}(\bfn)X^{s-\frac{1}{2}k(k+1)}
+o(X^{s-\frac{1}{2}k(k+1)}),
\]
and by substituting this relation into (\ref{6.1}), we conclude that
\[
A_{s,k}(\bfn)=\grS_{s,k}(\bfn)\grJ_{s,k}(\bfn)X^{s-\frac{1}{2}k(k+1)}+o(X^{s-\frac{1}{2}
k(k+1)}).
\]
This confirms the conclusion of Theorem \ref{theorem1.1}.

\section{Other applications} The method underlying the proof of Theorem \ref{theorem2.1} 
may be applied in many similar situations. Thus, the system of equations (\ref{1.1}) may be 
replaced by
\begin{equation}\label{7.1}
\sum_{i=1}^lx_i^j-\sum_{i=l+1}^{l+m}x_i^j=n_j\quad (1\le j\le k),
\end{equation}
provided that $l\ne m$ and $s=l+m\ge k(k+1)$. The Hilbert-Kamke problem corresponds to 
the situation here where $m=0$. What is critical is that the underlying system (\ref{7.1}) is 
{\it not} translation-dilation invariant, even in the special situation in which 
$\bfn={\mathbf 0}$. Let $B_{l,m,k}(\bfn;X)$ denote the number of solutions of the system 
(\ref{7.1}) with $1\le x_i\le X$ $(1\le i\le s)$. Then provided that $s\ge k(k+1)$, methods 
almost identical to those of this paper establish an asymptotic formula
\[
B_{l,m,k}(\bfn;X)\sim CX^{s-\frac{1}{2}k(k+1)},
\]
in which $C$ is an appropriate product of local densities. Here, the major innovation lies with 
the subconvex minor arc estimate provided by an analogue of Theorem \ref{theorem2.1}. 
The major arc analysis is handled in earlier work \cite{Ark1984}.\par

When $l=m=k(k+1)/2$, the system (\ref{7.1}) lacks translation-dilation invariance when 
$(n_1,\ldots ,n_{k-1})\ne {\mathbf 0}$, though a shadow of this invariance property 
remains. The latter complicates any attempt to derive an analogue of Theorem 
\ref{theorem2.1}. We have more to say concerning such systems in the memoir 
\cite{Woo2021a}.\par

Finally, when the coefficients $c_i\in \dbZ\setminus \{0\}$ $(1\le i\le k(k+1))$ satisfy the 
condition $c_1+\ldots +c_s\ne 0$, the methods underlying the proof of Theorem 
\ref{theorem2.1} remain in play when one examines the system
\[
c_1x_1^j+\ldots +c_sx_s^j=n_j\quad (1\le j\le k).
\]
Indeed, this scenario was discussed in work of the author dating from 2015 that was the 
subject of talks presented in G\"oteborg, Oxford and Strobl-am-Wolfgangsee (see 
\cite{Woo2021b}).

\bibliographystyle{amsbracket}

\begin{thebibliography}{18}

\bibitem{Ark1984} G. I. Arkhipov, \emph{The Hilbert-Kamke problem}, Izv. Akad. Nauk 
SSSR Ser. Mat. \textbf{48} (1984), no. 1, 3--52.

\bibitem{Ark2006} G. I. Arkhipov, \emph{Investigations on the Hilbert-Kamke problem}, 
Chebyshevski\u{\i} Sb. \textbf{7} (2006), no. 1 (17), 65--143.

\bibitem{AKC2004} G. I. Arkhipov, V. N. Chubarikov and A. A. Karatsuba, 
\emph{Trigonometric sums in number theory and analysis}, De Gruyter Expositions in 
Mathematics, \textbf{39}, Walter de Gruyter, Berlin, 2004.

\bibitem{BDG2016} J. Bourgain, C. Demeter and L. Guth, \emph{Proof of the main 
conjecture in Vinogradov's mean value theorem for degrees higher than three}, Ann. of 
Math. (2) \textbf{184} (2016), no. 2, 633--682.

\bibitem{Hil1909} D. Hilbert, \emph{Beweis f\"ur die Darstellbarkeit der ganzen Zahlen 
durch eine feste Anzahl n$^{\it ter}$ Potenzen (Waringsches Problem)}, Math. Ann. 
\textbf{67} (1909), no. 3, 281--300.

\bibitem{Kam1921} E. Kamke, \emph{Verallgemeinerungen des Waring-Hilbertschen 
Satzes}, Math. Ann. \textbf{83} (1921), no. 1-2, 85--112.

\bibitem{Mar1937} K. K. Mardzhanishvili, \emph{On the simultaneous representation of 
$n$ numbers by sums of perfect first, second, ..., $n$-th powers}, Izv. Akad. Nauk SSSR 
Ser. Mat. \textbf{1} (1937), 609--631.

\bibitem{Vau1997} R. C. Vaughan, \emph{The Hardy-Littlewood method}, 2nd edition, 
Cambridge University Press, Cambridge, 1997.

\bibitem{Woo2012a} T. D. Wooley, \emph{Vinogradov's mean value theorem via efficient 
congruencing}, Ann. of Math. (2) \textbf{175} (2012), no. 3, 1575--1627.

\bibitem{Woo2012b} T. D. Wooley, \emph{The asymptotic formula in Waring's problem}, 
Internat. Math. Res. Notices \textbf{2012} (2012), no. 7, 1485--1504.

\bibitem{Woo2016} T. D. Wooley, \emph{The cubic case of the main conjecture in 
Vinogradov's mean value theorem}, Adv. Math. \textbf{294} (2016), 532--561.

\bibitem{Woo2017} T. D. Wooley, \emph{Discrete Fourier restriction via efficient 
congruencing}, Internat. Math. Res. Notices \textbf{2017} (2017), no. 5, 1342--1389.

\bibitem{Woo2019} T. D. Wooley, \emph{Nested efficient congruencing and relatives of 
Vinogradov's mean value theorem}, Proc. London Math. Soc. (3) \textbf{118} (2019), 
no. 4, 942--1016.

\bibitem{Woo2021a} T. D. Wooley, \emph{Subconvexity in inhomogeneous Vinogradov 
systems}, preprint.

\bibitem{Woo2021b} T. D. Wooley, \emph{Rational lines on diagonal hypersurfaces and 
subconvexity via the circle method}, preprint.

\end{thebibliography}
\providecommand{\bysame}{\leavevmode\hbox to3em{\hrulefill}\thinspace}

\end{document}